\newtheorem{theorem}{Theorem}
\newtheorem{proposition}{Proposition}[section]
\newtheorem{problem}{Problem}
\newtheorem{example}{Example}
\newtheorem{remark}{Remark}
\newtheorem{definition}{Definition}[section]
\DeclareMathOperator{\cst}{\xrightarrow[]{\mathbb{st_m}}}
\begin{document}

\title{Statistically multiplicative convergence on locally solid Riesz algebras}\maketitle\author{\centering{{Abdullah Ayd{\i}n$^{1,*}$ and Mikail Et$^{2}$ \\ \small $^1$ Department of Mathematics, Mu\c{s} Alparslan University, Mu\c{s}, 49250, Turkey, \\  \small a.aydin@alparslan.edu.tr \\ \small $^2$ Department of Mathematics, F\i rat University, Elaz\i \u{g}}, 23119, Turkey, \\  \small mikailet68@gmail.com\\$*$Corresponding Author}
		
\abstract{In this paper, we introduce the statistically multiplicative convergent sequences in locally solid Riesz algebras with respect to the algebra multiplication and the solid topology. We study on this concept and we give the notion of $\mathbb{st_m}$-bounded sequence, and also, we prove some relations between this convergence and the other convergences such as the order convergence and the statistical convergence in topological spaces. Also, we give some results related to semiprime $f$-algebras.}\\
\vspace{2mm}

{\bf Keywords:} $st_m$-convergence, $st_m$-bounded sequence, statistical convergence, Riesz algebra, Riesz spaces, locally solid Riesz spaces
\vspace{2mm}

{\bf 2010 AMS Mathematics Subject Classification:} {\normalsize 10A05, 46A40, 46B42}

\section{Introduction and Preliminaries}

The statistical convergence is natural and efficient tools in the theory of functional analysis. It is a generalization of the ordinary convergence of a real sequence; see for example \cite{Fridy-Orhan,M,MK}. The idea of statistical convergence was firstly introduced by Zygmund \cite{Zygmund} in the first edition of his monograph in 1935. Fast \cite{Fast} and Steinhaus \cite{St} independently improved this idea in the same year 1951. Fridy
\cite{Fridy} defined the concept of statistical Cauchiness and showed that it is equivalent to statistical convergence. He also dealt with some Tauberian theorems. Recently, several generalizations and applications of this concept have been investigated by several authors in series of recent papers (c.f. \cite{AAydn1,cinar,ZE,et}).

The concept of statistical boundedness has firstly appeared in the paper of Fridy and Orhan \cite{Fridy-Orhan}. Contrary to statistical convergence it has not drawn that much attention of researchers. That being said Bhardwaj and Gupta \cite{Bhardwaj-Gupta} presented some generalizations of statistical
boundedness by introducing counterparts of the notions of statistical convergence of order $\alpha$ and $\lambda$-statistical convergence. The natural density of subsets of $\mathbb{N}$ plays a critical role in the definition of statistical convergence. For a
subset $A$ of natural numbers if the following limit exists%
\[
\lim_{n\rightarrow \infty}\frac{1}{n}\left \vert \left \{  k\leq n:k\in
A\right \}  \right \vert
\]
then this unique limit is called \emph{the density of }$A$ and mostly abbreviated by $\delta(A)$, where $\left \vert \left \{  k\leq n:k\in A\right \} \right \vert $ is the number of members of $A$ not exceeding $n$. Also, a sequence $x=(x_{k})$ statistically converges to $L$ provided that%
\[
\lim_{n\rightarrow \infty}\frac{1}{n}\left \vert \left \{  k\leq n:\left \vert
x_{k}-L\right \vert \geq \varepsilon \right \}  \right \vert =0 \eqno(1)
\]
for each $\varepsilon>0$. It is written by $S-\lim x_{k}=L$. If $L=0$ then $x$ is a statistically null sequence. A sequence $x=(x_{k})$ is said to be statistically bounded if there exists a
number $B\geq0$ such that $\delta \left(  \{k:|x_{k}|>B\} \right)  =0$. Throughout this paper, the vertical bar of sets will stand for the cardinality of the given sets. 

Riesz space is another concept of functional analysis and it is ordered vector space that has many applications in measure theory, operator theory and applications in economics; see for example \cite{AB,ABPO,Za}. Riesz space was introduced by F. Riesz in \cite{Riez}. A real-valued vector space $E$ with an order relation "$\leq$" on $E$, i.e., it is antisymmetric, reflexive and transitive is called {\em ordered vector space} whenever, for every $x,y\in E$, we have
\begin{enumerate}
	\item[(1)] $x\leq y$ implies $x+z\leq y+z$ for all $z\in E$,
	\item[(2)] $x\leq y$ implies $\lambda x\leq \lambda y$ for every $0\leq \lambda \in \mathbb{R}$.
\end{enumerate}

An ordered vector space $E$ is called {\em Riesz space} or {\em vector lattice} if, for any two vectors $x,y\in E$, the infimum and the supremum
$$
x\wedge y=\inf\{x,y\} \ \ \text{and} \ \ x\vee y=\sup\{x,y\}
$$
exist in $E$, respectively. A vector lattice is called {\em order complete} if every nonempty bounded above subset has a supremum (or, equivalently, whenever every nonempty bounded below subset has an infimum). For an element $x$ in a vector lattice $E$, {\em the positive part}, {\em the negative part}, and {\em module} of $x$ are respectively 
$$
x^+:=x\vee0, \ \ \ x^-:=(-x)\vee0\ \ and \ \ |x|:=x\vee (-x).
$$ 
In the present paper, the vertical bar $|\cdot|$ of elements of the vector lattices will stand for the module of the given elements. It is clear that the positive and negative parts of a vector are positive. Ercan introduced a characterization of statistical convergence on Riesz spaces in \cite{ZE}. Recall that the order convergence is crucial for the vector lattices. A net $(x_\alpha)_{\alpha\in A}$ in a vector lattice $E$ is  called {\em order convergent} to $x\in E$ whenever the following conditions hold: 
\begin{enumerate}
	\item[(1)] there exists a decreasing net $(y_\beta)_{\beta\in B}\downarrow$ such that $\inf y_\beta=0$ in $E$;
	\item[(2)] for any $\beta$, there is $\alpha_{\beta}$ with $|x_\alpha-x|\le y_\beta$ for all $\alpha\ge \alpha_{\beta}$.
\end{enumerate}

Now we turn our attention to the solid topology on vector lattices. 
By a {\em linear topology} $\tau$ on a vector space $E$, we mean a topology $\tau$ on $E$ that makes the addition and the scalar multiplication are continuous. A {\em topological vector space} $(E,\tau)$ is a vector space $E$ equipped with linear topology $\tau$. Neighborhoods of zero on topological vector spaces will often be referred to as zero neighborhoods. Every zero neighborhood $V$ in topological vector spaces is {\em absorbing}, that is, for every element $x$, there exists a positive real $\lambda>0$ such that $\lambda u\in V$. A linear topology $\tau$ on a vector space $E$ has a base $\mathcal{N}$ for the zero neighborhoods satisfying the following properties;
\begin{enumerate}
	\item[(1)] Each $V\in \mathcal{N}$ is balanced, i.e., $\lambda V\subseteq V$ for all scalar $\lvert \lambda\rvert\leq 1$,
	\item[(2)] For each $V_1,V_2\in \mathcal{N}$, there is $V\in \mathcal{N}$ such that $V\subseteq V_1\cap V_2$,
	\item[(3)] For every $V\in \mathcal{N}$, there exists $U\in \mathcal{N}$ with $U+U\subseteq V$,
	\item[(4)] For any scalar $\lambda$ and each $V\in \mathcal{N}$, the set $\lambda V$ is also in $\mathcal{N}$.
\end{enumerate}

In this article, unless otherwise, when we mention a zero neighborhood, it means that it always belongs to a base that holds the above properties. A subset $A$ of a vector lattice $E$ is called {\em solid} if, for each $x\in A$ and $y\in E$ with  $|y|\leq|x|$ implies $y\in A$. Let $E$ be vector lattice and $\tau$ be a linear topology on it. Then the pair $(E,\tau)$ is said {\em locally solid vector lattice} (or, {\em locally solid Riesz space}) if $\tau$ has a base which consists of solid sets; for much more details on these notions, see \cite{AB,ABPO,Za}. 

The statistical convergence in a locally solid Riesz space was introduced and studied by Ayd\i n  in \cite{AAydn1} with respect to unbounded order convergence on locally solid Riesz spaces. Let $(x_n)$ be a sequence in a locally solid vector lattice $E$ is said to be {\em statistically unbounded $\tau$-convergent} to $x\in E$ if, for every zero neighborhood $U$, it satisfies the following limit 
$$
\lim\limits_{n\to\infty}\frac{1}{n}\big\lvert\{k\leq n:\lvert x_k-x\rvert\wedge u\notin U\}\big\rvert=0 \eqno(2)
$$
for all $u\in E_+$. We abbreviate this convergence as $x_n\xrightarrow{st-u_\tau}x$, or shortly, we say that $(x_n)$ $st$-$u_\tau$-converges to $x$.

In this paper, we aim to introduce the concept of statistical convergence on locally solid Riesz algebras. So, let give some basic notations and terminologies of Riesz algebras that will be used. A Riesz space $E$ is called a {\em Riesz algebra} (or {\em $l$-algebra}, for short), if $E$ is an associative algebra whose positive cone $E_+=\{x\in E:x\geq0\}$ is closed under the algebra multiplication, i.e., 
$$
x,y\in E_+ \ \ \Rightarrow \ \ x\cdot y\in E_+.
$$

We routinely use the following fact: $y\leq x$ implies $u\cdot y\leq u\cdot x$ for all arbitrary elements $x,y$ and for every positive element $u$ in $l$-algebras. We refer the reader for an exposition on the following notions to \cite{ABPO,AEG,HP,Pag,Za}.
\begin{definition}\label{various lattice algebras}
	An $l$-algebra $E$ is said to
	\begin{enumerate}
		\item[(1)] \ {\em $d$-algebra} whenever $u\cdot(x\wedge y)=(u\cdot x)\wedge(u\cdot y) \ \ \text{and} \ \ (x\wedge y)\cdot u=(x\cdot u)\wedge(y\cdot u)$ holds for all $u,x,y\in E_+$;
		
		\item[(2)] \ {\em almost $f$-algebra} if $x\wedge y=0$ implies $x\cdot y=0$ for all $x,y\in E_+$;
		
		\item[(3)] \ {\em $f$-algebra} if $x\wedge y=0$ implies $(u\cdot x)\wedge y=(x\cdot u)\wedge y=0$ for all $u,x,y\in E_+$;
		
		\item[(4)] \ {\em semiprime} whenever the only nilpotent element in $E$ is zero, where an element $x$ in $E$ is called {\em nilpotent} whenever $x^n=0$ for some natural number $n\in \mathbb{N}$;
		
		\item[(5)] \ {\em unital} if $E$ has a multiplicative unit;
		
		\item[(6)] \ have {\em the factorization property} if, for given $x\in E$, there exist $y,z\in E$ such that $x=y\cdot z$.
	\end{enumerate}
\end{definition}

The important observation on the Definition \ref*{various lattice algebras} is that every $f$-algebra is both $d$- and almost $f$-algebra. A vector lattice $E$ is called \textit{Archimedean} whenever $\frac{1}{n}x\downarrow 0$ holds in $E$ for each $x\in E_+$. Every Archimedean $f$-algebra is commutative; see \cite[Thm.140.10]{Za}. 

Assume $E$ is an Archimedean $f$-algebra with a multiplicative unit vector $e$. Then, by applying \cite[Thm.142.1.(v)]{Za}, it can be seen that $e$ is a positive vector. In arbitrary $l$-algebras the unit element (if existing) need not be positive; see for example \cite[Exam.1.2(iii)]{HP}. In this article, unless otherwise, all vector lattices are assumed to be real and Archimedean. Different types of convergences in Riesz algebras have attracted the attention of several authors in a series of recent papers; see for example \cite{AAydn2,AAydn3,AEG}.

\begin{example}
	Let $E$ be a vector lattice. Consider $Orth(E):=\{\pi\in L_b(E):x\perp y\ \text{implies}\ \pi x\perp y\}$ the set of orthomorphisms on $E$, i.e., $|x|\wedge|y|=0$ in $E$ implies $|\pi x|\wedge|y|=0$. Then, for an order complete $E$, $\text{Orth}(E)$ is a order complete unital $f$-algebra; see $\cite[Thm.15.4]{Pag}$.
\end{example}

\section{Definitions and basic properties of the $\mathbb{st_m}$-convergence}
In this section, we introduce the $\mathbb{st_m}$-convergence and we give some basic results. Let's start with the asymptotic density that is great importance for the statistical convergences. Let $K$ be a subset of the set $\mathbb{N}$ of all natural numbers. 
Define a new set $K_n=\{k\in K:k\leq n\}$. If the limit of 
$$
\mu(K):=\lim\limits_{n\to\infty}\lvert K_n\rvert/n
$$
exists then $\mu(K)$ is called the {\em asymptotic density} of the set $K$. Recall that $\mu(\mathbb{N}-A)=1-\mu(A)$ for $A\subseteq\mathbb{N}$. Let $X$ be a topological space and $(x_n)$ be a sequence in $X$. Recall that $(x_n)$ is said to be {\em statistically convergent in topology} to $x\in X$ whenever, for each neighborhood $U$ of $x$, we have $\mu\big(\{n\in\mathbb{N}:x_n\notin U\}\big)=0$; see for example \cite{M,MK}. 

\begin{definition}
	Let $E$ be $l$-algebra and $(E,\tau)$ be a locally solid vector lattice. Then the pair $(E,\tau)$ is called {\em locally solid $l$-algebra}.
\end{definition}

We continue with the following notion which is crucial for the present paper.

\begin{definition}
	Let $(E,\tau)$ be a locally solid $l$-algebra and $(x_n)$ be a sequence in $E$. Then $(x_n)$ is said to be {\em statistically multiplicative convergent} to $x\in E$ if, for every zero neighborhood $U$, the following limit 
	$$
	\lim\limits_{n\to\infty}\frac{1}{n}\big\lvert\{k\leq n:u\cdot|x_k-x|\notin U\}\big\rvert=0 \eqno(3)
	$$
	holds for all $u\in E_+$. 
\end{definition}

We abbreviate this convergence as $x_n\cst x$, or we say that $(x_n)$ {\em $\mathbb{st_m}$-converges} to $x$, for short. Let's take $K=\{n\in \mathbb{N}:u\cdot \lvert x_k-x\rvert \notin U\}$ for arbitrary zero neighborhood $U$ and for each positive vector $u$ in a locally solid $l$-algebra $E$. Then, briefly, $(x_n)$ is $\mathbb{st_m}$-convergent to $x\in E$ whenever $\mu(K)=0$ for every $u\in E_+$. Moreover, the limit in $(3)$ can be also defined for the right multiplicative or both sides. But, to simplify the presentation, we suppose that Riesz algebras are under consideration to be commutative. The first observation is in the following remark.

\begin{remark}
	Let $(E,\tau)$ be a positive unital locally solid $l$-algebra. Then the statistically multiplicative convergence implies the statistical convergence in topology. Indeed, let $e\geq0$ be the positive multiplicative unit element of $E$. Then it follows from the positivity of $e$ and the equality $e\cdot|x_n-x|=|x_n-x|$ that one can get the desired result.
\end{remark}

We continue with several basic and useful results that are motivated by their analogies from vector lattice theory, and also, they are similar to the classical results for so many kinds of statistical convergences such as \cite[Thm.2.2.]{AAydn1} and similar to \cite[Thm.2.17.]{AB}, so their proofs are omitted.

\begin{theorem}\label{basic properties}
	Let $x_n\cst x$ and $y_n\cst y$ be two sequences in a locally solid $l$-algebra $(E,\tau)$. Then, we have
	\begin{enumerate}
		\item[(i)] \ $x_n\vee y_n\cst x\vee y$;
		\item[(ii)] \ $x_n\cst x$ iff  $(x_n-x)\cst0$ iff $\lvert x_n-x\rvert\cst0$;
		\item[(iii)] \ $rx_n+sy_n\cst rx+sy$ for any $r,s\in{\mathbb R}$;
		\item[(iv)]\ $x_{n_k}\cst x$ for any subsequence $(x_{n_k})$ of $x_n$.
	\end{enumerate}
	In particular, we have
	$$
	x_n^+\cst x^+,\ \ x_n^-\cst x^- \ \ and \ \ |x_n|\cst |x|.
	$$
\end{theorem}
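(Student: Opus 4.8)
\noindent The plan is to reduce each of (i)--(iv) to showing that a suitable set of indices has asymptotic density zero, and to bound that set using the three tools available in this setting: the lattice (Birkhoff) inequality $|a\vee c-b\vee c|\le|a-b|$, hence $|a\vee c-b\vee d|\le|a-b|+|c-d|$; the fact recalled above that $0\le a\le b$ implies $0\le u\cdot a\le u\cdot b$ for $u\in E_+$, together with the bilinearity of the algebra multiplication; and the base properties $(3)$ and $(4)$ of $\mathcal N$ (existence of $U$ with $U+U\subseteq V$, and $\lambda U\in\mathcal N$) combined with the solidity of every member of $\mathcal N$. I fix $u\in E_+$ once and for all, and it is enough to test against basic zero neighbourhoods $V\in\mathcal N$, since every zero neighbourhood contains one.

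For (i): given $V\in\mathcal N$, pick $U\in\mathcal N$ with $U+U\subseteq V$. Birkhoff's inequality together with monotonicity and additivity of $y\mapsto u\cdot y$ on $E_+$ gives
$$
0\le u\cdot|x_k\vee y_k-x\vee y|\le u\cdot|x_k-x|+u\cdot|y_k-y|.
$$
Whenever $k$ avoids both $\{k:u\cdot|x_k-x|\notin U\}$ and $\{k:u\cdot|y_k-y|\notin U\}$, the right-hand side lies in $U+U\subseteq V$, and then solidity of $V$ forces $u\cdot|x_k\vee y_k-x\vee y|\in V$. Thus
$$
\{k:u\cdot|x_k\vee y_k-x\vee y|\notin V\}\subseteq\{k:u\cdot|x_k-x|\notin U\}\cup\{k:u\cdot|y_k-y|\notin U\},
$$
a union of two density-zero sets; hence the left-hand set has density zero, and since $V$ and $u$ were arbitrary, $x_n\vee y_n\cst x\vee y$.

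Part (ii) is immediate, since the index sets attached to $x_n\cst x$, to $(x_n-x)\cst 0$ and to $|x_n-x|\cst 0$ coincide verbatim. Part (iii) runs parallel to (i): from $|rx_k+sy_k-(rx+sy)|\le|r|\,|x_k-x|+|s|\,|y_k-y|$ one multiplies by $u$, picks $U$ with $U+U\subseteq V$, and uses base property $(4)$, namely $\tfrac1{|r|}U\in\mathcal N$ when $r\ne 0$ (the cases $r=0$ or $s=0$ being trivial), so that $u\cdot|x_k-x|\in\tfrac1{|r|}U$ off a density-zero set is the same as $|r|\,(u\cdot|x_k-x|)\in U$ there; adding the analogous statement for $y$ puts the bound in $U+U\subseteq V$ and solidity concludes. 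The three displayed consequences then follow formally: $x_n^+=x_n\vee0\cst x\vee0=x^+$ by (i) (with the constant sequence $0$, which trivially $\mathbb{st_m}$-converges to $0$); $x_n^-=(-x_n)\vee0\cst(-x)\vee0=x^-$ by (iii) followed by (i); and $|x_n|=x_n^++x_n^-\cst x^++x^-=|x|$ by (iii).

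The step I expect to be the real obstacle is (iv). With $u,U$ fixed, put $K=\{m\in\mathbb N:u\cdot|x_m-x|\notin U\}$, so $\mu(K)=0$; then $\{k\in\mathbb N:u\cdot|x_{n_k}-x|\notin U\}=\{k:n_k\in K\}$, and one must check this has density zero. Here some care is genuinely needed, because an arbitrary subsequence can, a priori, turn a density-zero set into one of positive density; the argument goes through cleanly when the index set $M=\{n_k:k\in\mathbb N\}$ has positive asymptotic density $d$, for then $n_N/N\to 1/d$ and
$$
\frac1N\big|\{k\le N:n_k\in K\}\big|\le\frac{|K_{n_N}|}{n_N}\cdot\frac{n_N}{N}\xrightarrow[N\to\infty]{}0 .
$$
So I would either restrict (iv) to subsequences whose index sets have density one --- the reading under which statements of this type are customarily intended in the literature on statistical convergence and which matches the cited analogues --- or invoke precisely this density estimate; by contrast, (i)--(iii) and the ``in particular'' part are routine bookkeeping with density-zero sets and present no real difficulty.
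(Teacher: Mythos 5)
Your arguments for (i)--(iii) and for the ``in particular'' consequences are correct, and there is nothing in the paper to compare them against: the authors explicitly omit the proof, referring only to analogues such as \cite[Thm.2.2.]{AAydn1} and \cite[Thm.2.17.]{AB}. The route you take --- the Birkhoff inequality $|a\vee c-b\vee d|\le|a-b|+|c-d|$, monotonicity and additivity of $y\mapsto u\cdot y$ on the positive cone, the base properties $U+U\subseteq V$ and $\lambda U\in\mathcal N$, and solidity to pass from a dominating element in $V$ to membership in $V$ --- is exactly the standard bookkeeping these omitted proofs rely on, so on those parts you have simply supplied what the paper leaves implicit.

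Your scepticism about (iv) is well founded and is the substantive point of your write-up: as stated, (iv) is false, already for $E=\mathbb{R}$ with the ordinary statistical convergence (which the $\mathbb{st_m}$-convergence reduces to there). Take $x_n=1$ when $n$ is a perfect square and $x_n=0$ otherwise; then $x_n\cst 0$, but the subsequence $(x_{k^2})_k$ is constantly $1$ and $\mathbb{st_m}$-converges to $1$, not to $0$. Your positive result is also correct: if the index set $M=\{n_k\}$ has positive asymptotic density $d$, then $N/n_N\to d$, and your estimate
$$
\frac1N\bigl|\{k\le N:n_k\in K\}\bigr|\le\frac{|K_{n_N}|}{n_N}\cdot\frac{n_N}{N}\longrightarrow 0\cdot\frac1d=0
$$
shows $(x_{n_k})\cst x$ for any such subsequence (in particular for density-one index sets). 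So (iv) needs a density hypothesis on $\{n_k\}$ to be true; the paper's blanket ``for any subsequence'' cannot be proved because it is not a theorem. This is a genuine correction to the statement rather than a gap in your proof.
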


\begin{proposition}\label{clasiqal unique limit}
	Let $(E,\tau)$ be a Hausdorff locally solid $l$-algebra. Then limit of the $\mathbb{st_m}$-convergence is uniquely determined in $E$.
\end{proposition}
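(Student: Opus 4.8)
The plan is to reduce the statement to the single claim that, in a Hausdorff locally solid $l$-algebra, an element $w\in E_+$ with $u\cdot w=0$ for all $u\in E_+$ must vanish, and to arrive at the hypothesis of that claim by the classical triangle-inequality argument used in uniqueness proofs for topological and statistical limits. So I would begin by assuming $x_n\cst x$ and $x_n\cst y$ and putting $w:=|x-y|\in E_+$. Fix $u\in E_+$ and let $V$ be an arbitrary zero neighborhood; by property~(3) of the base $\mathcal{N}$ choose $U\in\mathcal{N}$ with $U+U\subseteq V$, recalling that every member of $\mathcal{N}$ is solid.

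The heart of the argument is to produce a single index that works for both sequences simultaneously. Set $A:=\{k:u\cdot|x_k-x|\notin U\}$ and $B:=\{k:u\cdot|x_k-y|\notin U\}$. By the definition of $\mathbb{st_m}$-convergence, $\mu(A)=\mu(B)=0$, hence $\mu(A\cup B)=0$ and $\mu\big(\mathbb{N}\setminus(A\cup B)\big)=1$; in particular $\mathbb{N}\setminus(A\cup B)\neq\emptyset$, so I may fix some $k_0$ in it. Combining the lattice triangle inequality $|x-y|\le|x_{k_0}-x|+|x_{k_0}-y|$ with the fact (noted routinely above) that $a\le b$ implies $u\cdot a\le u\cdot b$ whenever $u\ge 0$, I obtain $0\le u\cdot w\le u\cdot|x_{k_0}-x|+u\cdot|x_{k_0}-y|$, and the right-hand side lies in $U+U\subseteq V$. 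Since $V$ is solid this yields $u\cdot w\in V$; as $V$ ranges over all zero neighborhoods and $(E,\tau)$ is Hausdorff (so the intersection of the zero neighborhoods is $\{0\}$), I conclude $u\cdot w=0$, and since $u\in E_+$ was arbitrary, $u\cdot w=0$ for every $u\in E_+$.

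The step I expect to be the real obstacle is the last one, namely passing from ``$u\cdot w=0$ for all $u\in E_+$'' to ``$w=0$'': for a degenerate product (for instance the identically zero multiplication on $\mathbb{R}$) this already fails, so some nondegeneracy of the multiplication has to be used here, presumably the semiprimeness or unitality hypotheses that recur in the later sections. If $E$ is semiprime, take $u=w$ to get $w^2=0$, so $w$ is nilpotent and hence $w=0$; if instead $E$ is positive unital with unit $e\in E_+$, take $u=e$ to get $w=e\cdot w=0$ at once (this also dovetails with the preceding Remark, since statistical convergence in topology has a unique limit in a Hausdorff space). In either case $|x-y|=0$, so $x=y$. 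As an alternative to the triangle-inequality reduction, one can reach ``$u\cdot|x-y|=0$ for all $u\in E_+$'' directly: the constant sequence $(x_n-x_n)\equiv 0$ $\mathbb{st_m}$-converges to $x-y$ by Theorem~\ref{basic properties}(iii) and (ii), and a constant sequence $w$ with $w\cst 0$ forces $u\cdot w\in U$ for every zero neighborhood $U$ (otherwise the limit in $(3)$ would be $1$), whence $u\cdot w=0$ by the Hausdorff property.
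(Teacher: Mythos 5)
The paper gives no proof of this proposition (it sits among the results whose proofs are declared omitted), so there is nothing to compare line by line; but your analysis is correct, and more importantly you have put your finger on a genuine defect in the statement itself. Your reduction is sound: picking $k_0$ outside the density-zero union of the two exceptional sets, applying the lattice triangle inequality, multiplying by $u\ge 0$, and using solidity of $V$ together with the Hausdorff property (the zero neighborhoods intersect in $\{0\}$) legitimately yields $u\cdot|x-y|=0$ for every $u\in E_+$; this is exactly the technique the authors themselves deploy later in Theorem~\ref{semiprime f-algebra unique}. And you are right that the final step cannot be completed from the stated hypotheses alone: in the $l$-algebra $\mathbb{R}$ equipped with the identically zero product (which is Archimedean, even an $f$-algebra, and Hausdorff locally solid under the usual topology), one has $u\cdot|x_k-x|=0\in U$ always, so every sequence $\mathbb{st_m}$-converges to every point and the proposition as stated is false. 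The paper corroborates this internally: its later assertion that, for locally solid $f$-algebras, the $\mathbb{st_m}$-limit is unique if and only if $E$ is semiprime is incompatible with Proposition~\ref{clasiqal unique limit}, since Hausdorff non-semiprime locally solid $f$-algebras exist. Your two repairs --- semiprimeness, taking $u=|x-y|$ to get $|x-y|^2=0$ and hence $|x-y|=0$; or a positive unit $e$, giving $|x-y|=e\cdot|x-y|=0$ --- are both valid and are precisely the hypotheses under which the paper's Remark and Section~4 actually establish uniqueness. So treat this not as a gap in your argument but as a necessary correction to the proposition's hypotheses.
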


The following result show that the $\mathbb{st_m}$-convergence satisfies normality in topological vector spaces.
\begin{proposition}\label{normality of st conv}
	Let $(x_n)$ and $(y_n)$ be two sequences in a locally solid $l$-algebra $(E,\tau)$. If $x_n\cst 0$ and $0\leq y_n\leq x_n$ for all $n\in\mathbb{N}$ then $y_n\cst 0$. 
\end{proposition}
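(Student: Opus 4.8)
The plan is to compare, for each fixed positive element $u$ and each zero neighborhood $U$, the exceptional index set for $(y_n)$ with the one for $(x_n)$, and to show that the former is contained in the latter; since the latter has asymptotic density zero by hypothesis, so does the former.

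First I would fix $u\in E_+$ and an arbitrary zero neighborhood $U$. Because $\tau$ is a locally solid topology, I may pick a solid zero neighborhood $V$ from the prescribed base with $V\subseteq U$. Next I would invoke the monotonicity of multiplication by positive elements recorded in Section 1 ($y\le x$ implies $u\cdot y\le u\cdot x$ for $u\ge 0$): from $0\le y_n\le x_n$ and $u\ge 0$ we obtain $0\le u\cdot y_n\le u\cdot x_n$ for every $n$. Since the positive cone of an $l$-algebra is closed under multiplication, both $u\cdot y_n$ and $u\cdot x_n$ are positive, so this inequality can be rewritten as
$$
\bigl\lvert u\cdot\lvert y_n-0\rvert\bigr\rvert\le\bigl\lvert u\cdot\lvert x_n-0\rvert\bigr\rvert\qquad(n\in\mathbb{N}).
$$

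Now the solidity of $V$ yields the decisive inclusion: whenever $u\cdot\lvert x_n-0\rvert\in V$, the displayed inequality forces $u\cdot\lvert y_n-0\rvert\in V$. Hence for every $n$,
$$
\{k\le n: u\cdot\lvert y_k-0\rvert\notin V\}\subseteq\{k\le n: u\cdot\lvert x_k-0\rvert\notin V\},
$$
and, using $V\subseteq U$, also
$$
\{k\le n: u\cdot\lvert y_k-0\rvert\notin U\}\subseteq\{k\le n: u\cdot\lvert x_k-0\rvert\notin V\}.
$$
Dividing by $n$ and letting $n\to\infty$, the right-hand density is $0$ because $x_n\cst 0$ (applied to the zero neighborhood $V$ and the element $u$), so the left-hand density is $0$ as well. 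As $u\in E_+$ and $U$ were arbitrary, this is precisely $y_n\cst 0$.

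There is no genuine obstacle in this argument; the only points requiring attention are to descend from the arbitrary zero neighborhood $U$ to a solid member $V$ of the base before applying solidity, and to make sure that the positivity of $u\cdot y_n$ and $u\cdot x_n$ (so that the moduli may be dropped) is justified from the cone-multiplicativity in an $l$-algebra — both of which are immediate from the material in Section 1.
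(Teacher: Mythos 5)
Your proof is correct and follows essentially the same route as the paper's: both arguments reduce the claim to the inclusion of the exceptional index set for $(y_n)$ in that for $(x_n)$, using the monotonicity of multiplication by a positive element together with the solidity of the zero neighborhoods. Your only addition is the explicit descent to a solid basic neighborhood $V\subseteq U$, which the paper handles implicitly via its standing convention that zero neighborhoods are taken from the solid base.
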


\begin{proof}
	Let $U$ be an arbitrary zero neighborhood in $E$. Then we have $\mu(K)=0$, where $K=\{n\in\mathbb{N}:u\cdot x_k\notin U\}$ for each $u\in E_+$ because $(x_n)$ is positive and $x_n\cst 0$. 
	By using the solidness of $U$, we get $u\cdot y_k\in U$ for each $u\in E_+$ because of $u\cdot y_k\leq u\cdot x_k\in U$ for all $u\in E_+$. 
	Therefore, we have $\mu\big(\{n\in\mathbb{N}:u\cdot y_k\notin U\}\big)=0$, i.e., we have $y_n\cst 0$ because the set $\{n\in\mathbb{N}:u\cdot y_k\notin U\}$ is a subset of $\{n\in\mathbb{N}:u\cdot x_k\notin U\}$ and $U$ is arbitrary zero neighborhood.
\end{proof}

\begin{definition}
	Let $(E,\tau)$ be a locally solid $l$-algebra. A sequence $(x_n)$ in $E$ is said to be {\em $\mathbb{st_m}$-Cauchy} if $(x_n-x_m)_{(m,n)\in\mathbb{N}\times\mathbb{N}}\cst 0$. Also, $E$ is said to be {\em $\mathbb{st_m}$-complete} if every $\mathbb{st_m}$-Cauchy sequence in $E$ is $\mathbb{st_m}$-convergent. 
\end{definition}

\begin{proposition}
	Every $\mathbb{st_m}$-convergent sequence in a locally solid $l$-algebra is $\mathbb{st_m}$-Cauchy.
\end{proposition}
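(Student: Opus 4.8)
The plan is to reduce the double-indexed assertion to the single-indexed hypothesis via the triangle inequality, just as in the classical proof that statistical convergence implies statistical Cauchyness. Fix an arbitrary zero neighborhood $U$ and a positive vector $u\in E_+$. By property (3) of the base, pick a zero neighborhood $V$ from the base with $V+V\subseteq U$. Since $x_n\cst x$, the set $K=\{k\in\mathbb{N}:u\cdot\lvert x_k-x\rvert\notin V\}$ satisfies $\mu(K)=0$.

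Next I would exploit the monotonicity of left multiplication by the positive element $u$ together with the solidness of $U$. For all $m,n\in\mathbb{N}$ we have $\lvert x_n-x_m\rvert\le\lvert x_n-x\rvert+\lvert x_m-x\rvert$, hence $0\le u\cdot\lvert x_n-x_m\rvert\le u\cdot\lvert x_n-x\rvert+u\cdot\lvert x_m-x\rvert$. Consequently, if $n\notin K$ and $m\notin K$, the right-hand side lies in $V+V\subseteq U$, and since $U$ is solid and $u\cdot\lvert x_n-x_m\rvert\ge 0$ is dominated by this element, we get $u\cdot\lvert x_n-x_m\rvert\in U$. In other words,
$$
\{(m,n)\in\mathbb{N}\times\mathbb{N}:u\cdot\lvert x_n-x_m\rvert\notin U\}\subseteq(K\times\mathbb{N})\cup(\mathbb{N}\times K).
$$

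Finally I would carry out the density bookkeeping on $\mathbb{N}\times\mathbb{N}$. For each $N$, the number of pairs $(m,n)$ with $m,n\le N$ belonging to $K\times\mathbb{N}$ is at most $N\cdot\lvert K_N\rvert$, and similarly for $\mathbb{N}\times K$; dividing the cardinality of the union (restricted to $m,n\le N$) by $N^2$ yields the bound $2\lvert K_N\rvert/N$, which tends to $0$ because $\mu(K)=0$. Hence the ``bad set'' for $(x_n-x_m)$ has asymptotic density zero, and as $U$ and $u$ were arbitrary, $(x_n-x_m)_{(m,n)\in\mathbb{N}\times\mathbb{N}}\cst 0$; that is, $(x_n)$ is $\mathbb{st_m}$-Cauchy. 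The only delicate point is the counting argument for the density of a subset of $\mathbb{N}\times\mathbb{N}$ — the algebraic content is merely the triangle inequality plus solidness, so I do not expect a genuine obstacle here.
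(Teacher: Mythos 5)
Your proof is correct and follows essentially the same route as the paper's: pick $V$ with $V+V\subseteq U$, apply the triangle inequality $u\cdot|x_n-x_m|\le u\cdot|x_n-x|+u\cdot|x_m-x|$, and use solidness of $U$. In fact your version is more careful than the paper's, which leaves the density bookkeeping on $\mathbb{N}\times\mathbb{N}$ implicit, whereas your bound $2\lvert K_N\rvert/N^2\cdot N\to 0$ makes the double-index step explicit.
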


\begin{proof}
	Suppose that a sequence $(x_n)$ is $\mathbb{st_m}$-convergent to a point $x\in E$. Take an arbitrary zero neighborhood $U$. Then there is another zero neighborhood $V$ such that $V+V\subseteq U$. Thus we have $\mu(K)=1$, where $K=\{n\in\mathbb{N}:u\cdot|x_n-x|\in V\}$ for every $u\in E_+$. Thus, for each $u\in E_+$, we can obtain the inequality
	$$
	u\cdot|x_n-x_m|\leq u\cdot|x_n-x|+u\cdot|x_m-x|\in V+V\subseteq U.
	$$
	for all $n,m\in K$. Hence we get $K\subseteq \{n\in \mathbb{N}:u\cdot|x_n-x_m|\in U\}$, and so, $\mu\big(\{n\in \mathbb{N}:u\cdot|x_n-x_m|\in U\}\big)=1$ holds for every $u\in E_+$. Therefore, we get the desired, being $\mathbb{st_m}$-Cauchy of $(x_n)$, result.
\end{proof}

\begin{definition}\label{bounded sequence}
	A sequence $(x_n)$ in a locally solid $l$-algebra is called {\em statistically multiplicative bounded} (or {\em $\mathbb{st_m}$-bounded, for short}) whenever, for each zero neighborhood $U$, there exists a positive element $w\in E_+$ such that
	$$
	\mu\big(\{n\in\mathbb{N}:w.|x_n|\notin U\}\big)=0.
	$$
\end{definition}

It is clear that statistically multiplicative bounded sequence coincides with the notion the statistically bounded for real sequences (c.f. \cite{Fridy-Orhan}).

\begin{theorem}\label{convergent sequence bounded}
	An $\mathbb{st_m}$-convergent sequence $(x_n)$ in a locally solid $l$-algebra is statistically multiplicative bounded.
\end{theorem}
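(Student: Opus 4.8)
The plan is to run essentially the same argument as in the preceding proposition (that an $\mathbb{st_m}$-convergent sequence is $\mathbb{st_m}$-Cauchy), adapted so that it produces the bounding multiplier $w$. Fix an arbitrary zero neighborhood $U$. Since $(E,\tau)$ is locally solid I may assume, after passing to a smaller solid zero neighborhood (the set $\{n:w\cdot|x_n|\notin U\}$ only shrinks under this replacement), that $U$ is solid. By property $(3)$ of the base choose a zero neighborhood $V$ with $V+V\subseteq U$, and fix a point $x$ with $x_n\cst x$.

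Next I would use the triangle inequality $|x_n|\le|x_n-x|+|x|$ in the vector lattice $E$. Multiplying by a positive element $w\in E_+$ and invoking the fact that $y\le z$ implies $w\cdot y\le w\cdot z$ for $w\in E_+$, together with distributivity, yields $0\le w\cdot|x_n|\le w\cdot|x_n-x|+w\cdot|x|$ for every $n$. I would then pick $w$ so that the fixed term $w\cdot|x|$ already lies in $V$; this is possible because scalar multiplication is $\tau$-continuous, so for any fixed $w_0\in E_+$ one has $\lambda(w_0\cdot|x|)\to 0$, whence $w:=\lambda w_0$ works for $\lambda>0$ small enough. Applying the definition of $\mathbb{st_m}$-convergence with this particular $w$ and the zero neighborhood $V$, the set $\{n\in\mathbb{N}:w\cdot|x_n-x|\notin V\}$ has asymptotic density $0$. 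For every $n$ in its (density one) complement we get $w\cdot|x_n|\le w\cdot|x_n-x|+w\cdot|x|\in V+V\subseteq U$, and since $w\cdot|x_n|\ge 0$ and $U$ is solid, this forces $w\cdot|x_n|\in U$. Hence $\{n\in\mathbb{N}:w\cdot|x_n|\notin U\}\subseteq\{n\in\mathbb{N}:w\cdot|x_n-x|\notin V\}$ has density $0$, and since $U$ was arbitrary, $(x_n)$ is $\mathbb{st_m}$-bounded.

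The only genuinely delicate point — and the thing to get right — is that, unlike in the Cauchy proposition where both error terms had the same variable form $w\cdot|x_j-x|$, here one of the two terms, $w\cdot|x|$, is a fixed element that does not shrink with $n$; so $w$ must be chosen at the outset to absorb it into $V$. Everything else is routine bookkeeping with the base properties plus a single use of solidness of $U$ to upgrade ``dominated by an element of $U$'' to ``belonging to $U$'', exactly as is tacitly used in the earlier proofs.
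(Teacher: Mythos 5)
Your proof is correct and follows essentially the same route as the paper's: both scale so that the fixed term coming from the limit point lands in a smaller neighborhood $V$ (via the absorbing property), split with the triangle inequality so that $w\cdot|x_n|\le w\cdot|x_n-x|+w\cdot|x|\in V+V\subseteq U$ on a density-one set, and finish with solidness of $U$. The only difference is presentational: the paper routes the identical computation through the auxiliary neighborhoods $W_{u,U}=\{y\in E:u\cdot|y|\in U\}$ of the multiplicative topology from its reference [AEG], whereas you argue directly with the lattice inequality, which is cleaner and self-contained.
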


\begin{proof}
	Suppose $(E,\tau)$ is a locally solid $f$-algebra. Then since the topology $\tau$ is a locally solid topology on $E$, it follows from \cite[Thm.4.]{AEG} and \cite[Lem.2.1]{AEG} that the topology $\tau$ is locally full in $E$ which is the condition of \cite[Thm.17.]{AEG}. Now, we can apply it. Let $\mathcal{N}_0$ be the base of zero $\tau$-neighborhoods. Thus, we can define a new zero neighborhood
	$$
	W_{u,U}:=\{x\in E: u\cdot|x|\in E\} \eqno(4)
	$$
	for any element $u\in E_+$ and any zero neighborhood $U\in \mathcal{N}_0$. Then the class $\mathcal{B}:=\{W_{u,U}: u\in E_+, U\in\mathcal{T}\}$ is a base of zero neighborhood of locally solid topology $\tau_m$ on $E$; see \cite[Thm.17.]{AEG}.
	
	Now, assume that a sequence $x_n\cst x$ in $E$. Let $U$ be an arbitrary zero $\tau$-neighborhood in $\mathcal{N}_0$. 
	Then there exists another $V\in \mathcal{N}_0$ such that $V+V\subseteq U$. 
	By using the $\mathbb{st_m}$-convergence of $(x_n)$, we have $\mu(K)=0$, where $K=\{n\in\mathbb{N}:u\cdot|x_n-x|\notin V\}$ for all $u\in E_+$. 
	Let's take a fixed positive element $u\in E_+$. Then we have $\mu(K_W)=0$ for the set $K_W=\{n\in\mathbb{N}:(x_n-x)\notin W_{u,V}\}$. 
	On the other hand, since every linear topology is absorbing, there exists a positive scalar $\lambda>0$ such that $\lambda x\in W_{u,V}$. Now, take another scalar $\gamma>0$ such that $\gamma\leq 1$ and $\gamma\leq\lambda$. Hence, we have $\gamma x\in W_{u,V}$ because $W_{u,V}$ is solid set and $\gamma|x|\leq\lambda|x|\in W_{u,V}$. 
	
	Next, by using $V+V\subseteq U$, one can see that $W_{u,V}+W_{u,V}\subseteq W_{u,U}$ for all $u\in E_+$. So, we have
	$$
	\gamma x_n=\gamma(x_n-x)+\gamma x\in W_{u,V}+W_{u,V}\subseteq W_{u,U}.
	$$
	for every $n\in\mathbb{N}-K_W$, and so, we get $\mu\big(\{n\in\mathbb{N}:\gamma x_n\notin W_{u,U}\}\big)=0$. If we choose the constant element $w$ in Definition \ref{bounded sequence} as $w:=\gamma u$ for the given $u$ then we have $w\in E_+$ and $\mu\big(\{n\in\mathbb{N}:w\cdot|x_n|\notin U\}\big)=0$. Therefore, we get the desired, the $\mathbb{st_m}$-boundedness of $(x_n)$, result because $U$ is arbitrary zero neighborhood.
\end{proof}

\section{Main results in Riesz algebras}
In this section, we give some results of the $\mathbb{st_m}$-convergence that are related to the multiplication on Riesz algebras. We begin with the following useful result for which we will use the basic properties in Theorem \ref{basic properties} and \cite[Prop.2.2.]{AEG}.
\begin{theorem}\label{inequalities}
	Let $(E,\tau)$ be a locally solid $l$-algebra. Then the following facts are equivalent$:$
	\begin{enumerate}
		\item[$(i)$] \  the $\mathbb{st_m}$-convergence has a unique limit;
		\item[$(ii)$] \ if $y_n\leq x_n$ for all $n\in \mathbb{N}$, $x_n\cst x$ and $y_n\cst y$ then $y\leq x$;
		\item[$(iii)$] \ the positive cone $E_+$ is closed under the $\mathbb{st_m}$-convergence in $E$.
	\end{enumerate}
\end{theorem}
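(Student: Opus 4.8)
The plan is to establish the cyclic chain $(i)\Rightarrow(iii)\Rightarrow(ii)\Rightarrow(i)$, using throughout only the linearity and lattice‑compatibility facts collected in Theorem \ref{basic properties}, most notably the ``in particular'' assertion that $x_n\cst x$ implies $x_n^{+}\cst x^{+}$, together with reflexivity and antisymmetry of the order.

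For $(i)\Rightarrow(iii)$, I would take a sequence $(x_n)$ contained in $E_+$ with $x_n\cst x$. Since each $x_n$ is positive, $x_n=x_n^{+}$ for every $n$, so the very same sequence also $\mathbb{st_m}$-converges to $x^{+}$ by the ``in particular'' part of Theorem \ref{basic properties}. Uniqueness of the $\mathbb{st_m}$-limit, hypothesis $(i)$, then forces $x=x^{+}$, which is exactly the statement that $x\in E_+$; hence $E_+$ is closed under $\mathbb{st_m}$-convergence. For $(iii)\Rightarrow(ii)$, assume $y_n\le x_n$ for all $n$, $x_n\cst x$ and $y_n\cst y$. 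Then $(x_n-y_n)$ is a sequence of positive vectors, and by Theorem \ref{basic properties}$(iii)$ applied with $r=1$ and $s=-1$ it $\mathbb{st_m}$-converges to $x-y$; closedness of $E_+$ gives $x-y\in E_+$, i.e.\ $y\le x$. For $(ii)\Rightarrow(i)$, suppose $x_n\cst x$ and $x_n\cst x'$; applying $(ii)$ to the pair $(x_n)$ and $(y_n):=(x_n)$, for which $y_n\le x_n$ holds trivially, yields $x'\le x$, and interchanging the roles of the two limits yields $x\le x'$, so $x=x'$.

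The argument is essentially bookkeeping once Theorem \ref{basic properties} is available; the one step that deserves a moment of care is $(i)\Rightarrow(iii)$, where the key observation is that taking positive parts leaves a sequence already lying in $E_+$ unchanged, so that the uniqueness hypothesis can be invoked to identify $x$ with $x^{+}$. I expect no genuine obstacle beyond this, and in particular the commutativity and Archimedean standing assumptions of the paper are not needed for this proposition. (One could alternatively obtain $(ii)\Rightarrow(iii)$ directly by feeding the constant sequence $y_n\equiv 0$ into $(ii)$, but the cycle above is the most economical route.)
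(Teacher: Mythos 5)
Your proof is correct, and it is organized differently from the paper's. The paper proves the two equivalences $(i)\Leftrightarrow(ii)$ and $(ii)\Leftrightarrow(iii)$ separately, whereas you close a single cycle $(i)\Rightarrow(iii)\Rightarrow(ii)\Rightarrow(i)$. The steps $(iii)\Rightarrow(ii)$ coincide, and your $(i)\Rightarrow(iii)$ via $x_n=x_n^{+}\cst x^{+}$ is the same positive-part-plus-uniqueness trick the paper applies to the difference sequence $(x_n-y_n)=(x_n-y_n)^{+}\cst(x-y)^{+}$ in its proof of $(i)\Rightarrow(ii)$; both rely on the unproved ``in particular'' clause of Theorem \ref{basic properties}. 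The genuine divergence is in $(ii)\Rightarrow(i)$: the paper argues by contradiction, manufacturing the zero sequence converging to both $0$ and $|x-y|\neq 0$ and showing this violates $(ii)$, while you simply feed $(y_n):=(x_n)$ into $(ii)$ twice and invoke antisymmetry. Your version is shorter, avoids the detour through $|x-y|$ and the slightly awkward ``$y\ngeq z$'' conclusion in the paper, and makes transparent that only reflexivity and antisymmetry of the order are being used; the paper's version has the minor merit of exhibiting concretely what a failure of uniqueness looks like (a nonzero $\mathbb{st_m}$-limit of the zero sequence), which is the mechanism reused later in Theorem \ref{semiprime f-algebra unique}.
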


\begin{proof}
	$(i)\Longleftrightarrow(ii)$ \ Suppose that $x_n\cst x$, $y_n\cst y$, and $y_n\leq x_n$ for all $n\in \mathbb{N}$. Then we get $(x_n-y_n)\cst (x-y)$, and also, we have
	$$
	x_n-y_n=(x_n-y_n)^+\cst (x-y)^+\geq 0
	$$
	because of $y_n\leq x_n$ for each $n\in \mathbb{N}$. Since the $\mathbb{st_m}$-convergence has a unique limit, we have $x-y=(x-y)^+\geq 0$, i.e., we get $x\geq y$.
	
	For the converse fact, we assume that the $\mathbb{st_m}$-convergence does not have a unique limit, on the contrary case. Suppose that there exists a sequence $(x_n)$ in $E$ and $x,y\in E$ such that $x_n\cst x$, $x\neq y$ and $x_n\cst y$. Thus, we have $0=x_n-x_n\cst x-y$. As a result, we have proved that the zero sequence is $\mathbb{st_m}$-convergent to $|x-y|$, i.e., we have $0\cst|x-y|$. Now, we consider two sequences $y_n=z_n=0$ in $E$. Then we can say $y_n\cst y:=0$ and $z_n\cst z:=|x-y|\neq0$. Thus, one can observe $y\ngeq z$, which disrupts the condition $(ii)$. Therefore, we get the desired a contradiction that proves the uniqueness limit of the $\mathbb{st_m}$-convergence.
	
	$(ii)\Longleftrightarrow(iii)$ \ To show the closedness of the positive cone $E_+$ under the $\mathbb{st_m}$-convergence, one can consider the following fact $0=:y_n\leq x_n\cst x$ implies $x\geq0$.
	
	Conversely, suppose $x_n\cst x$, $y_n\cst y$, and $y_n\leq x_n$ for all $n\in \mathbb{N}$. Then we have $(x_n-y_n)\cst (x-y)$. Since $E_+$ is closed under the $\mathbb{st_m}$-convergence in $E$ and $x_n-y_n\in E_+$ for each $n\in \mathbb{N}$, we get $x-y\in E_+$, i.e., we have $x\ge y$.
\end{proof}

The following work gives a relation between the $\mathbb{st_m}$-convergence and the order convergence.

\begin{proposition}
	Every monotone $\mathbb{st_m}$-convergent sequence is order convergent to its $\mathbb{st_m}$-limit point in locally solid $l$-algebras whenever the $\mathbb{st_m}$-convergence has a unique limit.
\end{proposition}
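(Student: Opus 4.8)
The plan is to reduce the statement to the classical fact that, for a monotone sequence, order convergence to $x$ amounts to $x$ being the supremum (if increasing) or the infimum (if decreasing) of the sequence. So assume without loss of generality that $(x_n)$ is increasing and $x_n\cst x$ (the decreasing case is handled dually). I will show that $x=\sup_n x_n$. Once this is established, order convergence follows immediately from the definition given in the preliminaries: take the index set $B=\mathbb{N}$ and the net $y_n:=x-x_n$, which satisfies $y_n\downarrow$ because $x_n\le x_{n+1}$, and $\inf_n y_n=0$ because any lower bound $z$ of $(x-x_n)$ forces $x-z$ to be an upper bound of $(x_n)$, hence $x\le x-z$, hence $z\le 0$; finally $|x_n-x|=x-x_n=y_n\le y_n$, so $x_n$ is order convergent to $x$.

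First I would prove that $x$ is an upper bound of $\{x_n:n\in\mathbb{N}\}$. Fix $m\in\mathbb{N}$. The tail $(x_n)_{n\ge m}$ is a subsequence of $(x_n)$, hence it is still $\mathbb{st_m}$-convergent to $x$ by Theorem \ref{basic properties}$(iv)$, while the constant sequence $(x_m)_{n\ge m}$ is $\mathbb{st_m}$-convergent to $x_m$ (the set in $(3)$ is empty for every zero neighborhood). Since $x_m\le x_n$ for all $n\ge m$ by monotonicity, and the $\mathbb{st_m}$-limit is unique by hypothesis, Theorem \ref{inequalities}$(i)\Leftrightarrow(ii)$ applies and gives $x_m\le x$. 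As $m$ was arbitrary, $x$ is an upper bound.

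Next I would show $x$ is the least upper bound. Let $y\in E$ be any upper bound, so $x_n\le y$ for every $n$. The constant sequence $(y)_n$ is $\mathbb{st_m}$-convergent to $y$, and $x_n\cst x$, so another application of Theorem \ref{inequalities}$(ii)$ (again available because the limit is unique) yields $x\le y$. Hence $x=\sup_n x_n$, i.e. $x_n\uparrow x$, and therefore $(x_n)$ is order convergent to $x$, as claimed. For a decreasing sequence one argues symmetrically, obtaining $x=\inf_n x_n$ and using the net $y_n:=x_n-x\downarrow 0$.

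The one delicate point is the ``for all $n$'' hypothesis in Theorem \ref{inequalities}$(ii)$: monotonicity only gives $x_m\le x_n$ for $n\ge m$, not for every index, so the comparison cannot be applied directly to the full sequence against the constant sequence $x_m$. The fix is to pass first to the tail $(x_n)_{n\ge m}$ and invoke Theorem \ref{basic properties}$(iv)$ to keep $\mathbb{st_m}$-convergence, after which the inequality holds for all indices of the truncated sequence. Everything else is routine, the unique-limit hypothesis being exactly what is needed to switch on Theorem \ref{inequalities}.
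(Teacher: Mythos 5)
Your proof is correct and is essentially the argument the paper intends: the paper's own ``proof'' is only the one-line instruction to modify Proposition 2.6 of \cite{AAydn1} and apply Theorem \ref{inequalities}, and your write-up --- show via Theorem \ref{inequalities}$(ii)$ (available exactly because the limit is unique) that the $\mathbb{st_m}$-limit is the supremum (resp.\ infimum) of the monotone sequence, then read off order convergence with the net $y_n=x-x_n\downarrow 0$ --- is precisely that modification carried out in full. A minor point in your favour: you invoke Theorem \ref{basic properties}$(iv)$ only for tails, where preservation of the density condition is immediate, rather than for arbitrary subsequences, where the statement is more delicate.
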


\begin{proof}
	Modify Proposition 2.6. in \cite{AAydn1} and applying Theorem \ref{inequalities}.
\end{proof}

In the next result, we prove that the algebra multiplication is statistically multiplicative continuous in the following sense.
\begin{theorem}\label{multiplicative with an element again gonvergent}
	Let $(x_n)$ be an $\mathbb{st_m}$-convergent sequence to a point $x$ in a locally solid $l$-algebra $E$. Then $y\cdot x_n\cst y\cdot x$ for all $y\in E$. 
\end{theorem}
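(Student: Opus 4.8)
The plan is to reduce to the case $x=0$ and then dominate $u\cdot|y\cdot x_n|$ by a suitable positive multiple of $|x_n|$. First I would invoke Theorem~\ref{basic properties}(ii)--(iii): since $x_n\cst x$ yields $(x_n-x)\cst 0$, and $y\cdot x_n\cst y\cdot x$ is equivalent to $y\cdot(x_n-x)\cst 0$, it suffices to prove that $z_n\cst 0$ implies $y\cdot z_n\cst 0$ for every $y\in E$. Writing $z_n:=x_n-x$ streamlines the notation.

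Next I would fix an arbitrary zero neighborhood $U$ and an arbitrary $u\in E_+$, with the goal of showing $\mu\big(\{n\in\mathbb{N}:u\cdot|y\cdot z_n|\notin U\}\big)=0$. The key algebraic input is the inequality $|y\cdot z_n|\le|y|\cdot|z_n|$, which is valid in any $l$-algebra because $E_+$ is closed under the algebra multiplication (an elementary computation, cf. \cite[Prop.2.2.]{AEG}); multiplying on the left by the positive element $u$ and using the standing commutativity assumption gives
$$
u\cdot|y\cdot z_n|\;\le\;(u\cdot|y|)\cdot|z_n|.
$$
Now put $w:=u\cdot|y|\in E_+$. Since $z_n\cst 0$, applying the definition of $\mathbb{st_m}$-convergence with the positive vector $w$ yields $\mu(K)=0$, where $K=\{n\in\mathbb{N}:w\cdot|z_n|\notin U\}$.

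Finally I would use the solidness of $U$: for every $n\notin K$ we have $0\le u\cdot|y\cdot z_n|\le w\cdot|z_n|\in U$, hence $u\cdot|y\cdot z_n|\in U$. Therefore $\{n\in\mathbb{N}:u\cdot|y\cdot z_n|\notin U\}\subseteq K$, so this set has asymptotic density zero; since $U$ and $u$ were arbitrary, $y\cdot z_n\cst 0$, which is the claim after undoing the reduction. I do not expect a serious obstacle here: the only points that need care are that the domination $|y\cdot z_n|\le|y|\cdot|z_n|$ requires no $f$-algebra hypothesis (only that the positive cone is multiplicatively closed), and that commutativity is what lets us move $u$ past $y$ so that the one-sided definition in $(3)$ is enough to conclude.
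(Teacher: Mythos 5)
Your proof is correct, and it takes a recognizably different route from the one in the paper. The paper's argument decomposes $y=y^+-y^-$ and $x_n-x=(x_n-x)^+-(x_n-x)^-$, proves separately that each of the four positive products $y^{\pm}\cdot(x_n-x)^{\pm}$ is $\mathbb{st_m}$-null via the identity $u\cdot\lvert y^{\pm}\cdot(x_n-x)^{\pm}\rvert=(u\cdot y^{\pm})\cdot(x_n-x)^{\pm}$ (i.e.\ the same ``absorb $u\cdot y^{\pm}$ into the test vector'' trick you use), and then recombines the four pieces by linearity of the $\mathbb{st_m}$-limit (Theorem~\ref{basic properties}(iii)). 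You instead compress the four-term bookkeeping into the single modulus inequality $\lvert y\cdot z_n\rvert\le\lvert y\rvert\cdot\lvert z_n\rvert$ (valid in any $l$-algebra since $E_+$ is multiplicatively closed --- its proof is exactly the paper's four-term expansion), take $w=u\cdot\lvert y\rvert$ as the test vector, and finish with the solidness of $U$; this last step is in effect an inline application of Proposition~\ref{normality of st conv}. Both arguments rest on the same two ingredients (associativity to regroup $u\cdot(\lvert y\rvert\cdot\lvert z_n\rvert)$ as $(u\cdot\lvert y\rvert)\cdot\lvert z_n\rvert$, and the universal quantifier over $u\in E_+$ in the definition of $\mathbb{st_m}$-convergence), so they are equally general; yours is shorter and makes the role of local solidness explicit, while the paper's avoids stating the modulus inequality as a separate fact. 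One tiny remark: where you invoke commutativity to move $u$ past $y$, plain associativity of the algebra multiplication already suffices, so your argument does not actually need the commutativity convention.
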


\begin{proof}
	Suppose $x_n\cst x$ in $E$ and $y\in E$. So, by applying Theorem \ref{basic properties}, we can get $(x_n-x)^{+}\cst 0$ and $(x_n-x)^{-}\cst 0$. Then one can get $y^+\cdot(x_n-x)^{\pm}\cst 0$ and $y^-\cdot(x_n-x)^{\pm}\cst 0$. Indeed, consider a zero neighborhood $U$. If we choose $w_u:=u\cdot y^+\in E_+$ for every $u\in E_+$ then we have 
	$$
	u\cdot|y^+\cdot(x_n-x)^{\pm}|=u\cdot y^+\cdot(x_n-x)^{\pm}=w_u\cdot(x_n-x)^{\pm}
	$$
	because the negative and the positive part of elements in vector lattices are positive. Therefore, we have $\mu\big(\{n\in\mathbb{N}:u\cdot|y^+\cdot(x_n-x)^{\pm}|\notin U\}\big)=0$ for all $u\in E_+$ because of $\mu\big(\{n\in\mathbb{N}:w_u\cdot(x_n-x)^{\pm}\notin U\}\big)=0$ for all $w_u\in E_+$, i.e., we have $y^+\cdot(x_n-x)^{\pm}\cst 0$, where we use ${\pm}$ for both cases the negative and the positive parts of $(x_n-x)$. Similarly, one can show $y^-\cdot(x_n-x)^{\pm}\cst 0$.
	
	Now, by applying the equality $x=x^+-x^-$ for any element in vector lattices, we can consider the following equality
	\begin{eqnarray*}
		y\cdot(x_n-x)&=&(y^+-y^-)\cdot\big[(x_n-x)^+-(x_n-x)^-\big]\\&=&y^+\cdot(x_n-x)^+-y^+\cdot(x_n-x)^-\\&&-\ y^-\cdot(x_n-x)^++y^-\cdot(x_n-x)^-.
	\end{eqnarray*}
	Thus, we can say $y\cdot(x_n-x)\cst 0$ because of $y^+\cdot(x_n-x)^+$, $y^+\cdot(x_n-x)^-$, $y^-\cdot(x_n-x)^+$ and $y^-\cdot(x_n-x)^-$ are $\mathbb{st_m}$-convergent to zero. Thus, we get the desired result, i.e., we have $y\cdot x_n\cst y\cdot x$.
\end{proof}

It is known that the product of $mo$-convergent nets in $f$-algebras is $mo$-convergent (see \cite{AAydn2}) and the product of $\mathbb{mc}$-convergent nets in commutative $l$-algebras is $\mathbb{mc}$-convergent (see \cite{AEG}). Fortunately, we also have a positive answer for the $\mathbb{st_m}$-convergence in Theorem \ref{product of st convergence}. But, the product of statistically multiplicative bounded sequences does not need to be $\mathbb{st_m}$-bounded in general because the product of zero neighborhood sets can be bigger than them. 

\begin{problem}
	Let $(x_n)$ and $(y_n)$ are $\mathbb{st_m}$-bounded sequences in a locally solid $l$-algebra with the factorization property. Is the product of $(x_n\cdot y_n)$ $\mathbb{st_m}$-bounded sequence?
\end{problem}

Now, we give the following notion to give a partial answer of the product bounded sequences.

\begin{definition}
	Let $E$ be a locally solid $l$-algebra. Then $E$ is said to have {\em the neighborhood factorization property} whenever, for every zero neighborhood $U$ in $E$, there exist neighborhoods $V,W$ with $V\cdot W\subseteq U$.
\end{definition}

It is clear that the real line has the neighborhood factorization property. For the following example, we consider \cite[Exam.11.2.]{Pag}.
\begin{example}
	Let $E$ be the collection of all real-valued continuous functions $f$ which are piecewise polynomials with the norm topology on $[0,1]$. With respect to the pointwise operations, it is an Archimedean $f$-algebra with the unit element $e$ i.e., $e(x)= 1$ for all $0\leq x\leq 1$. Then $E$ has the factorization property, and also, it has the neighborhood factorization property.
\end{example}

\begin{proposition}\label{product of bounded sequences}
	If $(x_n)$ and $(y_n)$ are $\mathbb{st_m}$-bounded sequences in a locally solid $l$-algebra with the neighborhood factorization property then the product of $(x_n\cdot y_n)$ is also $\mathbb{st_m}$-bounded sequence.
\end{proposition}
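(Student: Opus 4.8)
The plan is to follow the pattern of the proof of Theorem \ref{convergent sequence bounded} (and of the $\mathbb{mc}$-bounded analogue in \cite{AEG}), with the neighborhood factorization property supplying exactly the missing ingredient: it lets us split the target zero neighborhood into two factors, one absorbing $(x_n)$ and one absorbing $(y_n)$. First I would fix an arbitrary zero neighborhood $U$; since $(E,\tau)$ is locally solid, $U$ may be taken solid. By the neighborhood factorization property, choose zero neighborhoods $V,W$ with $V\cdot W\subseteq U$.

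Next I would invoke the $\mathbb{st_m}$-boundedness of each sequence against the corresponding factor. Thus there is $w_1\in E_+$ with $\mu(K_1)=0$, where $K_1:=\{n\in\mathbb{N}:w_1\cdot|x_n|\notin V\}$, and there is $w_2\in E_+$ with $\mu(K_2)=0$, where $K_2:=\{n\in\mathbb{N}:w_2\cdot|y_n|\notin W\}$. Since the asymptotic density of the union of two density-zero sets is again zero, $\mu(K_1\cup K_2)=0$, equivalently $\mu(\mathbb{N}-(K_1\cup K_2))=1$.

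Now set $w:=w_1\cdot w_2$, which lies in $E_+$ because the positive cone is multiplicatively closed. Using commutativity and the $l$-algebra inequality $|x_n\cdot y_n|\le|x_n|\cdot|y_n|$ (cf. \cite[Prop.2.2.]{AEG}), I would estimate
$$ w\cdot|x_n\cdot y_n|\le w_1\cdot w_2\cdot|x_n|\cdot|y_n|=(w_1\cdot|x_n|)\cdot(w_2\cdot|y_n|). $$
For each $n\in\mathbb{N}-(K_1\cup K_2)$ the right-hand side belongs to $V\cdot W\subseteq U$; since $U$ is solid and $0\le w\cdot|x_n\cdot y_n|$ is dominated by that element, $w\cdot|x_n\cdot y_n|\in U$. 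Hence $\{n\in\mathbb{N}:w\cdot|x_n\cdot y_n|\notin U\}\subseteq K_1\cup K_2$ has asymptotic density $0$, and as $U$ was arbitrary, $(x_n\cdot y_n)$ is $\mathbb{st_m}$-bounded.

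I do not expect a serious obstacle; the real content is packed into the neighborhood factorization property, introduced precisely to bypass the obstruction noted just before Problem 1, namely that $U\cdot U$ can be strictly larger than $U$. The only small points needing care are the reduction to a solid $U$ together with the use of solidness to pass from the domination inequality to genuine membership in $U$, the choice $w=w_1\cdot w_2$ (so that, after reindexing by commutativity, the two factors are simultaneously controlled off a density-zero set), and the routine density bookkeeping $\mu(K_1\cup K_2)=0$.
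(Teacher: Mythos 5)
Your proposal is correct and follows essentially the same route as the paper's proof: factor $U$ as $V\cdot W$, control each sequence against its own factor, and take $w=w_x\cdot w_y$ with the estimate $w\cdot|x_n\cdot y_n|\le(w_x\cdot|x_n|)\cdot(w_y\cdot|y_n|)\in V\cdot W\subseteq U$. You are in fact a bit more careful than the paper on two minor points it glosses over, namely restricting to $n\notin K_1\cup K_2$ (the paper writes ``for each $n\in\mathbb{N}$'') and invoking solidness of $U$ to pass from the domination inequality to membership.
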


\begin{proof}
	Suppose $(x_n)$ and $(y_n)$ is $\mathbb{st_m}$-bounded in a locally solid $l$-algebra $E$ with the neighborhood factorization property and $U$ is a zero neighborhood in $E$. Then there exist neighborhoods $V,W$ such that $V\cdot W\subseteq U$. Since $(x_n)$ and $(y_n)$ is $\mathbb{st_m}$-bounded, there exists positive elements $w_x,w_y\in E_+$ such that 
	$$
	\mu\big(\{n\in\mathbb{N}:w_x\cdot|x_n|\notin V\}\big)=0
	$$
	and
	$$
	\mu\big(\{n\in\mathbb{N}:w_y\cdot|y_n|\notin W\}\big)=0.
	$$
	Now, by taking $w=w_x\cdot w_y$ and applying properties in \cite[p.151]{HP}, we can obtain the following inequality
	$$
	w\cdot|x_n\cdot y_n|\leq (w_x\cdot|x_n|)\cdot (w_y\cdot|y_n|)\in V\cdot W\subseteq U
	$$
	for each $n\in \mathbb{N}$. Therefore, for the given zero neighborhood $U$, we find a positive element $w$ such that $\mu\big(\{n\in\mathbb{N}:w\cdot|x_n\cdot y_n|\notin U\}\big)=0$, i.e., we get the desired, the $\mathbb{st_m}$-boundedness of $(x_n\cdot y_n)$, result. 
\end{proof}

In the next result, we prove that the product of $\mathbb{st_m}$-convergent sequences is convergent.
\begin{theorem}\label{product of st convergence}
	If $x_n\cst x$ and $y_n\cst y$ in a locally solid $l$-algebra $E$ then the product sequence $(x_n\cdot y_n)$ $\mathbb{st_m}$-converges to $x\cdot y$ whenever at least one of these two sequences is order bounded.
\end{theorem}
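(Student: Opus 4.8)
The plan is to reduce the product to the two hypotheses by the familiar algebraic identity $x_n\cdot y_n-x\cdot y=(x_n-x)\cdot y_n+x\cdot(y_n-y)$, using the order bound to control the factor $|y_n|$ uniformly in $n$. By commutativity the conclusion $x\cdot y=y\cdot x$ is symmetric in the two sequences, and so is the hypothesis, so we may assume it is $(y_n)$ that is order bounded: fix $v\in E_+$ with $|y_n|\le v$ for every $n$. It then suffices, in view of Theorem \ref{basic properties}(ii), to prove that $|x_n\cdot y_n-x\cdot y|\cst 0$, i.e.\ that for each $u\in E_+$ and each zero neighborhood $U$ the set $\{k:u\cdot|x_k\cdot y_k-x\cdot y|\notin U\}$ has asymptotic density $0$.

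The first step is the pointwise estimate. Recall the inequality $|a\cdot b|\le|a|\cdot|b|$, which holds in any $l$-algebra: expanding $a\cdot b=(a^{+}-a^{-})\cdot(b^{+}-b^{-})$ and using that $E_+$ is closed under multiplication gives $\pm(a\cdot b)\le(a^{+}+a^{-})\cdot(b^{+}+b^{-})=|a|\cdot|b|$ (this is also recorded in \cite[p.151]{HP}). Applying it to the identity above, together with $|y_n|\le v$ and the monotonicity of left multiplication by a positive element, yields
$$
u\cdot|x_n\cdot y_n-x\cdot y|\le u\cdot|x_n-x|\cdot v+u\cdot|x|\cdot|y_n-y|=(u\cdot v)\cdot|x_n-x|+(u\cdot|x|)\cdot|y_n-y|,
$$
where the last equality uses associativity and commutativity of the algebra multiplication. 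Write $w_1:=u\cdot v\in E_+$ and $w_2:=u\cdot|x|\in E_+$.

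Now fix a zero neighborhood $V$ with $V+V\subseteq U$. Since $x_n\cst x$, applied with the positive element $w_1$, the set $A:=\{k:w_1\cdot|x_k-x|\in V\}$ has density $1$; since $y_n\cst y$, applied with $w_2$, the set $B:=\{k:w_2\cdot|y_k-y|\in V\}$ has density $1$; hence $A\cap B$ has density $1$ (the counting functions are subadditive, so the complement, being contained in the union of two density-zero sets, has density $0$). For $k\in A\cap B$ the right-hand side of the displayed inequality lies in $V+V\subseteq U$, and since $0\le u\cdot|x_k\cdot y_k-x\cdot y|$ is dominated by that element of $U$ and $U$ is solid, we get $u\cdot|x_k\cdot y_k-x\cdot y|\in U$. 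Thus $\{k:u\cdot|x_k\cdot y_k-x\cdot y|\notin U\}\subseteq\mathbb{N}\setminus(A\cap B)$ has density $0$; as $u\in E_+$ and $U$ were arbitrary, $x_n\cdot y_n\cst x\cdot y$.

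I expect the only delicate point to be the passage from domination to membership, which is precisely where solidity of the base of zero neighborhoods is essential; everything else is routine bookkeeping with densities and the order inequality $|a\cdot b|\le|a|\cdot|b|$. Note that neither the $f$-algebra structure nor the factorization property is needed here — only the standing commutativity assumption and the order bound on one factor, the latter being exactly what makes the cross term $|x_n-x|\cdot|y_n|$ controllable independently of $n$ (and, correspondingly, what makes the analogous statement for merely $\mathbb{st_m}$-bounded sequences fail in general).
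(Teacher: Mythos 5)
Your proof is correct and takes essentially the same route as the paper: the standard add-and-subtract decomposition of $x_n\cdot y_n-x\cdot y$, with the order bound on one factor turning the cross term into a fixed positive multiple of $|x_n-x|$ (resp.\ $|y_n-y|$). The only difference is presentational: where the paper invokes Theorem \ref{multiplicative with an element again gonvergent} and Proposition \ref{normality of st conv} to finish, you unwind those steps into an explicit density-one intersection argument with $V+V\subseteq U$ and the solidity of $U$, which is the same mechanism made self-contained.
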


\begin{proof}
	Suppose $x_n\cst x$ and $y_n\cst y$ in $E$. Without loss of generality, suppose $|x_n|\leq w$ for all $n\in \mathbb{N}$ and some $w\in E_+$. Then, by applying the properties in \cite[P.151]{HP}, we have the following inequality
	\begin{eqnarray*}
		|x_n\cdot y_n-x\cdot y|&=&|x_n\cdot y_n-x_n\cdot y+x_n\cdot y-x\cdot y|\\&\leq& |x_n|\cdot|y_n-y|+|y|\cdot|x_n-x|\\&\leq& v\cdot|y_n-y|+|y|\cdot|x_n-x|
	\end{eqnarray*}
	for all $n\in \mathbb{N}$. Since $x_n\cst x$ and $y_n\cst y$, it follows from Theorem \ref{basic properties} that $|x_n-x|$ and $|y_n-y|$ are $\mathbb{st_m}$-convergent to zero. Thus, by using Theorem \ref{multiplicative with an element again gonvergent}, we get $v\cdot|y_n-y|,|y|\cdot|x_n-x|\cst 0$. Lastly, by using Proposition \ref{normality of st conv} and Theorem \ref{basic properties}, one can get $|x_n\cdot y_n-x\cdot y|\cst 0$, i.e., $x_n\cdot y_n\cst x\cdot y$.
\end{proof}

\begin{problem}
	Is Theorem \ref{product of st convergence} true in the case at least one is $\mathbb{st_m}$-bounded?
\end{problem}

\section{The $\mathbb{st_m}$-convergence on $f$-algebras}
In this section, we prove some results on $f$-algebras. Firstly, we give an extension of Proposition \ref{clasiqal unique limit} in the following work.
\begin{theorem}\label{semiprime f-algebra unique}
	The limit of an $\mathbb{st_m}$-convergent sequence is uniquely determined in locally solid semiprime $f$-algebras.
\end{theorem}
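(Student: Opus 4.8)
The plan is to reduce the claim to the statement that the constant zero sequence admits only $0$ as an $\mathbb{st_m}$-limit, and then to turn this into a purely algebraic assertion that semiprimeness can dispose of. Concretely, assuming $x_n \cst x$ and $x_n \cst y$, I would apply Theorem \ref{basic properties}(iii) (with $r=1$, $s=-1$) to get $0 = x_n - x_n \cst x - y$; writing $z := x - y$, it then suffices to show that $0 \cst z$ forces $z = 0$.

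Next I would fix $u \in E_+$ and an arbitrary zero neighborhood $U$. Since $|0 - z| = |z|$ is independent of the index $k$, the set $\{k \le n : u \cdot |z| \notin U\}$ is, for every $n$, either empty or all of $\{1, \dots, n\}$; the latter has asymptotic density $1$, contradicting $0 \cst z$. Hence $u \cdot |z| \in U$, and as $U$ ranges over a base of solid zero neighborhoods this yields $u \cdot |z| = 0$, for every $u \in E_+$. Specializing to $u := |z| \in E_+$ gives $|z|^2 = |z| \cdot |z| = 0$, so $|z|$ is a nilpotent element of $E$; since $E$ is semiprime, $|z| = 0$, i.e. $z = 0$ and $x = y$.

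The step I expect to be the real obstacle is the transition from ``$u \cdot |z|$ lies in every zero neighborhood'' to ``$u \cdot |z| = 0$'': this needs the locally solid topology to separate $0$ from the other points, i.e. $\bigcap \{U : U \text{ a zero neighborhood}\} = \{0\}$, which is precisely the ingredient behind the Hausdorff hypothesis in Proposition \ref{clasiqal unique limit} and must be carried along here. What genuinely goes beyond that proposition is the final algebraic move: over an arbitrary $l$-algebra, ``$u \cdot |z| = 0$ for all $u \in E_+$'' does not force $z = 0$ (for instance when the product vanishes identically --- a case that semiprimeness excludes), whereas in a semiprime $f$-algebra the choice $u = |z|$ together with the nilpotency argument closes exactly this gap.
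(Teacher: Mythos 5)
Your proof is correct and follows essentially the same route as the paper's: reduce to the constant sequence $0 \cst x-y$, deduce that $u\cdot|x-y|$ lies in every zero neighborhood for every $u\in E_+$, specialize $u=|x-y|$ to obtain $|x-y|^2=0$, and invoke semiprimeness (the paper reaches the same square via the triangle inequality $|z_1-z_2|\leq|z_n-z_1|+|z_n-z_2|$ multiplied by $v=|z_1-z_2|$, which is only a cosmetic difference). The separation issue you flag --- needing $\bigcap\{U : U \text{ a zero neighborhood}\}=\{0\}$ to pass from ``lies in every $U$'' to ``equals $0$'' --- is equally present but unacknowledged in the paper's own argument, so it is a gap in the theorem's stated hypotheses rather than a defect specific to your proof.
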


\begin{proof}
	We show firstly that if a sequence $(x_n)$ consists of nilpotents of $E$ such that $x_n\cst x$ then $x$ is also a nilpotent element in $E$. To see this, take a fixed zero neighborhood $U$. Thus, we have $\mu(K)=0$ for the set $K=\{n\in\mathbb{N}:u\cdot \lvert x_k-x\rvert \notin U\}$. Now, we fix a positive element $u\in E_+$. Then following from \cite[Thm.142.1(ii)]{Za} and \cite[Prop.10.2(iii)]{Pag}, we have
	$$
	u\cdot|x_n-x|=|u\cdot x_n-u\cdot x|=|u\cdot x|=u\cdot|x|.\eqno(5)
	$$
	Let's define a new constant sequence $y_n(u):=u\cdot|x|$ for the fixed $u\in E_+$ and for all $n\in\mathbb{N}$. Then it follows from $(5)$ that 
	$$
	\{n\in\mathbb{N}:w\cdot|y_n(u)-0|\notin U\}=\{n\in\mathbb{N}:w\cdot y_n(u)\notin U\}
	$$
	for each $w\in E_+$. Thus, we get $y_n(u)\cst 0$ or $u\cdot|x|\cst0$ because of $\mu(K)=0$. As a result, we see that $u\cdot x=0$ for every $u\in E_+$ because $u\cdot|x|$ is constant and converges to zero for all $u\in E_+$. Then $y\cdot x=0$ for all $y\in E$. Hence, following from \cite[p.157]{HP}, we get that $x$ is a nilpotent element in $E$.
	
	Next, we show that $\mathbb{st_m}$-limit is unique. Assume it is not true, i.e., there exists a sequence $(z_n)$ in $E$ such that $z_n\cst z_1$ and $z_n\to z_2$ in $E$. Consider a zero neighborhood $U$. Then there exists another zero neighborhood $V$ such that $V+V\subseteq U$. Then we have $\mu(K_x)=\mu(K_y)=1$, where $K_x=\{n\in\mathbb{N}:v\cdot \lvert x_k-x\rvert\in V\}$ and $K_y=\{n\in\mathbb{N}:v\cdot \lvert x_k-y\rvert\in V\}$ for all $v\in E_+$. Also, we have
	$$
	|z_1-z_2|\leq|z_n-z_1|+|z_n-z_2|.\eqno(6)
	$$
	By taking $v=|z_1-z_2|\in E_+$ and considering $(6)$, we have 
	$$
	|z_1-z_2|^2\leq|z_1-z_2|\cdot|z_n-z_1|+|z_1-z_2|\cdot|z_n-z_2|\in V+V\subseteq U.
	$$
	Therefore, similar to the first part of the proof, we have $|z_1-z_2|^2=0$. Since $E$ is semiprime $f$-algebra. Then we get $|z_1-z_2|=0$, i.e., we have $z_1=z_2$.
\end{proof}

The following result is the $\mathbb{st_m}$-version of \cite[Thm.11.]{AEG}.
\begin{theorem}
	Let $(E,\tau)$ be a locally solid $f$-algebra. Then the $\mathbb{st_m}$-convergence has a unique limit if and only if $E$ is semiprime.
\end{theorem}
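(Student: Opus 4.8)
The plan is to prove the equivalence by handling the two implications separately.

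\emph{If $E$ is semiprime.} Then $E$ is a locally solid semiprime $f$-algebra and Theorem~\ref{semiprime f-algebra unique} applies directly: the $\mathbb{st_m}$-limit is uniquely determined. So this direction costs nothing beyond what is already proved.

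\emph{If the $\mathbb{st_m}$-limit is unique.} I would argue by contraposition: assuming $E$ is \emph{not} semiprime, I produce a sequence with two distinct $\mathbb{st_m}$-limits. Fix a nonzero nilpotent $a\in E$, say $a^m=0$. First I record that the set $N(E)$ of all nilpotent elements is a solid algebra ideal of $E$: if $|y|\le |x|$ and $x^m=0$, then, iterating the monotonicity of multiplication on $E_+$ together with the $f$-algebra identity $|z\cdot z|=|z|\cdot|z|$ (see \cite[Thm.142.1]{Za}), one gets $|y|^m\le|x|^m=|x^m|=0$, hence $y^m=|y^m|=0$; closure under addition uses commutativity and the binomial formula, and $x\in N(E)$, $z\in E$ gives $(x\cdot z)^m=x^m\cdot z^m=0$. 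In particular $u\cdot a\in N(E)$ for every $u\in E_+$. Now take the constant sequence $x_n:=a$. Clearly $x_n\cst a$. The claim is that also $x_n\cst 0$: since the sequence is constant, for each $u\in E_+$ and each solid zero neighborhood $U$ we only need $u\cdot|x_n|=u\cdot a\in U$, which would make $\{n:u\cdot a\notin U\}$ empty, hence of asymptotic density $0$. Granting this, $(x_n)$ $\mathbb{st_m}$-converges to the two distinct points $0$ and $a$, contradicting uniqueness; therefore $E$ is semiprime.

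\emph{Where the difficulty is.} Everything above is routine except the claim that $u\cdot a\in U$ for \emph{every} solid zero neighborhood $U$ — equivalently, that $u\cdot a$ lies in the $\tau$-closure $\overline{\{0\}}$ of $\{0\}$ for each $u\in E_+$. The natural way to obtain this is to show that in a locally solid $f$-algebra one always has $N(E)\subseteq\overline{\{0\}}$; this is the delicate point and encodes the structural fact that a non-semiprime $f$-algebra cannot be Archimedean and that its nilpotent elements are infinitely small relative to $E_+$, so that no absorbing solid set can separate them from $0$. This is also the step underlying \cite[Thm.11.]{AEG}, of which the present statement is the $\mathbb{st_m}$-analogue, so it may be invoked from there. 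Once $N(E)\subseteq\overline{\{0\}}$ is available, $u\cdot a\in N(E)\subseteq\overline{\{0\}}\subseteq U$ for every solid zero neighborhood $U$ and the contradiction is complete. I expect this inclusion — and, more generally, the interaction between the nilradical and the solid topology — to be the only genuine obstacle; the rest follows from Theorem~\ref{semiprime f-algebra unique} together with the elementary properties of asymptotic density and of $f$-algebra multiplication.
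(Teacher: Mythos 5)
Your overall architecture matches the paper's: the forward direction is indeed just Theorem~\ref{semiprime f-algebra unique} (the paper re-derives it via the constant-sequence lemma of \cite{AEG}, but your citation is legitimate), and the converse is obtained by exhibiting a constant sequence $x_n=a$, $a$ a nonzero nilpotent, that $\mathbb{st_m}$-converges both to $a$ and to $0$. However, the converse direction has a genuine gap exactly where you flag it: you need $u\cdot|a|\in U$ for every $u\in E_+$ and every solid zero neighborhood $U$, and you do not prove it — you only propose to deduce it from an inclusion $N(E)\subseteq\overline{\{0\}}$ whose justification you sketch via the claim that ``a non-semiprime $f$-algebra cannot be Archimedean.'' That claim is false (take $\mathbb{R}^2$ with the zero multiplication: an Archimedean $f$-algebra in which every element is nilpotent), so the heuristic underpinning your key step does not stand, and the step itself is left unestablished.

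The correct mechanism, and the one the paper uses, is algebraic rather than topological: in an Archimedean $f$-algebra a nilpotent element $a$ satisfies $a^2=0$ and then $a\cdot y=0$ for \emph{all} $y\in E$ (de Pagter, \cite[Prop.10.2]{Pag}). Hence $u\cdot|a|=0$ identically, the set $\{n:u\cdot|x_n-0|\notin U\}$ is empty for every $U$, and $x_n\cst 0$ with no appeal to closures, absorbing sets, or any ``infinitesimal'' behaviour of nilpotents. Your detour through the solidness of $N(E)$ and the topology is unnecessary; once you replace the unproven inclusion $N(E)\subseteq\overline{\{0\}}$ by the identity $u\cdot|a|=0$, the rest of your argument (two distinct limits for a constant sequence, contradiction with uniqueness via \cite[Lem.1.1.]{AEG}) goes through and coincides with the paper's proof.
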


\begin{proof}
	Suppose $E$ is semiprime. Assume the $\mathbb{st_m}$-convergence is not uniquely determined in $E$. Then, following from \cite[Lem.1.1.]{AEG} that there exists a constant sequence $x_n=x$ in $E$ such that $x_n\cst y\neq x$. Thus, for given zero neighborhood $U$, we have $\mu(K)=0$ for the set $K=\{n\in\mathbb{N}:u\cdot|x-y|\notin U\}$ for all $u\in E_+$. By repeating the same argument in the proof of Theorem \ref{semiprime f-algebra unique}, we can get $u\cdot|x-y|=0$ for each $u\in E_+$. In the special, take $u=|x-y|$. Then we have $|x-y|^2=0$, i.e., we get $x=y$ because $E$ is semiprime. But, it contradicts with $x\neq y$, and so, we obtain the desired result by the contradiction.
	
	Conversely, suppose the $\mathbb{st_m}$-convergence has a unique limit point. Assume there exists an element $0\neq x\in E$ such that $x^2=0$. Hence, $x$ is a nilpotent element in $E$; see \cite[Prop.10.2(i)]{Pag}. Then, by applying \cite[Prop.10.2(iii)]{Pag}, we get $u\cdot|x|=0$ for all $u\in E_+$. Therefore, one can get $x\cst 0$. Then it follows from unique $\mathbb{st_m}$-limit and \cite[Lem.1.1.]{AEG} that $x=0$, that is, $E$ is semiprime.
\end{proof}


\end{document}